\documentclass[12pt]{article}
\usepackage{hyperref}
\usepackage{cmap}                        
\usepackage[cp1251]{inputenc}            
\usepackage[english]{babel}
\usepackage[left=1in,right=1in,top=1in,bottom=1.5in]{geometry} 
\usepackage{amssymb,amsmath, amsthm, amscd,ifthen}
\usepackage{graphicx}
\usepackage{epigraph, xcolor, hyperref}

\newcommand{\R}{{\mathbb R}}

\newtheorem*{thm*}{Theorem}
\newcommand{\ff}{{\mathcal F}}
\newcommand{\aaa}{{\mathcal A}}

\newcommand{\G}{{\mathcal G}}
\newcommand{\V}{{\mathcal V}}

\newtheorem*{cla*}{Claim}
\newcommand{\bb}{{\mathcal B}}

\newtheorem{thm}{Theorem}

\newtheorem{opr}{Definition}
\newtheorem{lem}[thm]{Lemma}
\newtheorem{cla}[thm]{Claim}

\date{}

\title{Intersection theorems for $(-1,0,1)$-vectors}

\author{Peter Frankl\thanks{R\'enyi Institute, Budapest, Hungary and MIPT, Moscow. Research supported by the National Research, Development and Innovation Office -- NKFIH under the grant no.\ K 132696.}
\ and Andrey Kupavskii\thanks{MIPT, Moscow, IAS, Princeton and CNRS, Grenoble. Research supported by the grant of the Russian Government N 075-15-2019-1926.}}

\date{}

\begin{document}
\maketitle
\begin{abstract}
  In this paper, we investigate Erd\H os--Ko--Rado type theorems for families of vectors from $\{0,\pm 1\}^n$ with fixed numbers of $+1$'s and $-1$'s. Scalar product plays the role of intersection size. In particular, we sharpen our earlier result on the largest size of a family of such vectors that avoids the smallest possible scalar product. We also obtain an exact result for the largest size of a family with no negative scalar products. \end{abstract}
Intersection theorems form a classical part of discrete mathematics. They deal with families of sets in which pairwise intersection sizes of pairs of sets are restricted. One may forbid different intersection patterns. Forbidding all large intersections is the domain of coding theory. Forbidding all small intersections  lead to the famous theorems of Erd\H os--Ko--Rado and Ahswede--Khachatrian. Forbidding one intersection leads to Frankl--Wilson theorem. All these directions are very fruitful, and the corresponding results had great impact on combinatorics, discrete geometry and computer science. Let us dwell on some of these developments.

Set $[n]:=\{1,\ldots, n\}$ and let $2^{[n]},$ ${[n]\choose k}$ stand for the power set of $[n]$ and the set of all $k$-element subsets of $[n]$, respectively.  The Erd\H os--Ko--Rado theorem \cite{EKR} states that for $n\ge 2k$ the largest family $\ff\subset {[n]\choose k}$ in which any two sets intersect has size at most ${n-1\choose k-1}$.

Later, this theorem was extended to the case of {\it $t$-intersecting} families in ${[n]\choose k}$, i.e., the families in which any two sets intersect in $t$ elements. After a series of important developments \cite{F1}, \cite{W}, \cite{FF}, Ahlswede and Khachatrian \cite{AK} settled the conjecture due to the first author and determined the largest $t$-intersecting family in ${[n]\choose k}$ for any $n,k,t$. This theorem proved to be useful in, e.g., Hardness of Approximation \cite{DS}.

Wilson and the first author \cite{FW} obtained a surprising and powerful result using linear algebra, one particular case of which can be stated as follows: If $k$ is a prime power and no two sets in a family  $\ff\subset {[4k]\choose 2k}$ intersect in exactly $k$ elements, then $|\ff|\le 2{4k\choose k}$. That is, the size of any such $\ff$ is exponentially smaller than ${4k\choose 2k}$. This theorem has been influential in discrete geometry, where it implied exponential and sub-exponential lower bounds for the chromatic number of the space and Borsuk's problem, respectively, as well as in Ramsey theory, where it gave the best known explicit constructions of graphs avoiding large cliques and independent sets. We also note that this theorem is essentially sharp for families of sets.

In \cite{Rai1}, Raigorodskii applied the techniques of Frankl and Wilson to a more general collection of vectors and got improvements for both of the above discrete-geometric problems. Instead of working with sets, or vectors from $\{0,1\}^n$, he suggested to work with $\{0,\pm 1\}^n$. In this setting, however, the notion of intersection is ambiguous. The most appropriate notion for the applications in geometry was that of scalar product. At the same time, it turned out to be very challenging to extend extremal set theory techniques to this more geometric setting. In particular,  the lower bounds he got on the sizes of families in $\{0,\pm 1\}$ with one forbidden scalar product and the known upper bounds for the sizes of such families are exponentially far apart. For other related developments cf. \cite{K, PR}.

These developments and challenges motivated us to start a more systematic study of intersection theorems for $\{0,\pm 1\}$-vectors. There are different possible directions to pursue. In \cite{FK1, FK11}, we obtained  Erd\H os--Ko--Rado-type results for the families of $\{0,\pm 1\}$-vectors with fixed numbers of coordinates of each type. In \cite{FK5} (see also the arXiv version or  the corrigendum \cite{FK20}), we studied $t$-intersecting families of vectors with a fixed number of nonzero coordinates. Recently, Cherkashin and Kiselev
\cite{CK} obtained analogues of one forbidden intersection problem for the latter type of vectors.

\section{Fixed number of coordinates of each type}

\begin{opr}\label{def1} For $0\le \ell,k<n$ let $\mathcal V(n,k,\ell) \subset \R^n$  be the set of all $\{0,\pm 1\}$-vectors with exactly $k$ coordinates equal to $+1$ and $\ell$ coordinates equal to $-1$.
 \end{opr}

In what follows, we assume that $n\ge k+\ell$ and $k>\ell$. Thus, the minimum possible scalar product (denoted by $\langle .,.\rangle$) between two vectors from $\mathcal V(n,k,\ell)$ is $-2\ell$. Let us put $$\mathcal G(n,k,\ell) :=\big\{\mathcal V\subset \mathcal V(n,k,\ell): \langle \mathbf v,\mathbf w\rangle>-2\ell\text{ for any }\mathbf v,\mathbf w\in \V\big\},$$ $$ g(n,k,\ell):=\max_{\G\in \G(n,k,\ell)}|\G|.$$
We refer the reader to \cite{FK1} for a detailed discussion of the question of determining $g(n,k,\ell)$. Let us just mention the main results of \cite{FK1, FK11}. The first theorem completely determines $g(n,k,1)$.

 \begin{thm}[Frankl, Kupavskii \cite{FK1}]\label{fkjcta} We have
\begin{align}\label{eq001}g(n,k,1)=&  k{n-1\choose k} \text{\ \ \ \ \ \ \ for\ }2k\le n\le k^2,\\
\label{eq002}g(n+1,k,1)-g(n,k,1)=& {n\choose k}  \text{\ \ \ \ \ \ \ \ \ \ \ \ \ for\ } n> k^2.
\end{align}
\end{thm}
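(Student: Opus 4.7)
The plan is to identify each vector $\mathbf v \in \V(n,k,1)$ with the pair $(A,i)$, where $A \in \binom{[n]}{k}$ is its $+1$-support and $i \in [n]\setminus A$ is its $-1$-position. A direct computation gives
\[
  \langle \mathbf v, \mathbf w\rangle = |A\cap B| + [i=j] - [j\in A] - [i\in B],
\]
so the minimum value $-2$ is attained precisely when $A\cap B=\emptyset$, $j\in A$, $i\in B$, and $i\neq j$. Equivalently, associating the marked $(k{+}1)$-set $S=A\cup\{i\}$ to each $(A,i)$, the forbidden configuration becomes: two distinct marked $(k{+}1)$-sets $S,T$ with $|S\cap T|=2$ and $S\cap T$ equal to the pair of marks. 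This reformulation is my starting viewpoint.

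I would then apply two compression operations: the usual $(p,q)$-shift on the $+1$-support, and an analogous shift on the $-1$-coordinate replacing $i$ by a smaller available index. Standard case checks show that neither operation creates the forbidden pattern and both preserve $|\G|$, so after finitely many steps $\G$ is replaced by a shifted family of the same cardinality, and one may induct on $n$. For a shifted $\G$, partition it by the role of element $1$: $\G=\G_+\sqcup\G_-\sqcup\G_0$, consisting of vectors with $1\in A$, with $i=1$, and with $1\notin A\cup\{i\}$, respectively. Then $\G_0$ is a valid family in $\V(n-1,k,1)$ on $\{2,\ldots,n\}$, so $|\G_0|\le g(n-1,k,1)$ by induction. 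The piece $\G_-$ injects into $\binom{[n]\setminus\{1\}}{k}$ via $(A,1)\mapsto A$, giving the crude bound $|\G_-|\le\binom{n-1}{k}$; shiftedness pins down further interactions with $\G_+$ coming from the forbidden pattern.

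The threshold $n=k^2$ should emerge from the comparison of two potential increments for $g(n,k,1)-g(n-1,k,1)$: namely $k\binom{n-2}{k-1}$, the $(n-1)\to n$ increment of the star formula \eqref{eq001}, versus $\binom{n-1}{k}$, the increment in \eqref{eq002}. One checks that $k\binom{n-2}{k-1}\ge\binom{n-1}{k}$ iff $n\le k^2$. Hence for $n\le k^2$ the star from the inductive hypothesis absorbs all extra mass and we may keep $\G_+$ full-sized while $\G_-$ contributes little; whereas for $n>k^2$ it becomes more efficient to trade some of $\G_+$ for the up-to-$\binom{n-1}{k}$ vectors in $\G_-$, and this drives the new recursion.

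The hardest part will be controlling the cross-interaction between $\G_+$ and $\G_-$: a shifted vector $(A,1)\in\G_-$ with $2\in A$ and a shifted vector $(B,j)\in\G_+$ with $1\in B$ can form a forbidden pair whenever $A\cap B=\emptyset$ and $j\in A$, and such configurations are abundant. Teasing these out requires using shiftedness to propagate the constraints back to a small set of extremal configurations, then balancing the contributions of $\G_+$ and $\G_-$ coordinate by coordinate. Once this cross-bound is established, showing that the transition happens sharply at $n=k^2$ and that the two halves of the recursion \eqref{eq001}--\eqref{eq002} follow is largely bookkeeping via the inductive hypothesis; but getting the right form of the cross-bound, including the $k$ factor and the matching extremal families on either side of $k^2$, is the delicate step.
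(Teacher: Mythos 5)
This theorem is not proved in the present paper: it is quoted from your earlier reference [FK1], so there is no ``paper's own proof'' here to compare against. What you have written is a plausible plan, and your setup is sound: the translation of $\mathbf v\in\V(n,k,1)$ into a pair $(A,i)$, the scalar-product formula $\langle\mathbf v,\mathbf w\rangle=|A\cap B|+[i=j]-[j\in A]-[i\in B]$, the characterization of scalar product $-2$ as $A\cap B=\emptyset$, $j\in A$, $i\in B$, $i\ne j$, and the reduction to shifted families via compression are all correct and match the standard toolkit. The tripartition $\G=\G_+\sqcup\G_-\sqcup\G_0$ by the role of coordinate $1$, together with $|\G_0|\le g(n-1,k,1)$ by induction, is also the right starting point.

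However, what you have submitted is not a proof, and the missing piece is precisely the heart of the theorem. You explicitly defer the cross-interaction between $\G_+$ and $\G_-$ (``Teasing these out requires\ldots'', ``Once this cross-bound is established\ldots''), but this is not bookkeeping: it is the step that forces the sharp bound $k\binom{n-1}{k}$ in the small-$n$ regime and drives the changeover at $n=k^2$. A shifted $(A,1)\in\G_-$ and a shifted $(B,j)\in\G_+$ conflict exactly when $A\cap B=\emptyset$ and $j\in A$, i.e.\ you have a cross-intersecting/covering constraint between two large families, and one must prove a quantitative trade-off between $|\G_+|$, $|\G_-|$ and the remaining freedom in $\G_0$. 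Without stating and proving that inequality, the induction does not close; merely observing $|\G_-|\le\binom{n-1}{k}$ is far too weak (it would give nothing near \eqref{eq001}). There is also a minor slip in the threshold heuristic: $k\binom{n-2}{k-1}\ge\binom{n-1}{k}$ is equivalent to $n\le k^2+1$, not $n\le k^2$; the two formulas coincide at $n=k^2+1$, so the boundary in \eqref{eq001}--\eqref{eq002} is still $n=k^2$, but you should fix the claimed equivalence. As it stands, the proposal identifies the right structure but does not constitute a proof of the theorem.
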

The second theorem provides some partial results for general $\ell$.
\begin{thm}[Frankl, Kupavskii \cite{FK11}]\label{fkbar} We have
\begin{equation}\label{eq000}{n\choose k+\ell}{k+\ell-1\choose \ell-1}\le g(n,k,\ell)\le {n\choose k+\ell}{k+\ell-1\choose \ell-1}+{n\choose 2\ell}{2\ell\choose \ell}{n-2\ell-1\choose k-\ell-1}.\end{equation}
If $2k\le n\le 3k-\ell$, then \begin{equation}\label{eq111} |\ff| ={n-1\choose k+\ell-1}{k+\ell-1\choose\ell}.\end{equation}
\end{thm}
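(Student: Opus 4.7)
The plan is to prove the lower bound by exhibiting an explicit construction and then to attack the upper bound (and the exact formula in the narrow range) through a shifting reduction followed by a structural analysis.

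For the lower bound I would take $\V^{*}\subseteq\V(n,k,\ell)$ to be the family of all vectors $\mathbf v$ whose support $\mathrm{supp}(\mathbf v)$ has its minimum element equal to a $-1$-coordinate of $\mathbf v$. Counting by first choosing the support $S$ of size $k+\ell$ and then distributing the remaining $\ell-1$ minus signs among the other $k+\ell-1$ elements of $S$, one obtains $|\V^{*}|=\binom{n}{k+\ell}\binom{k+\ell-1}{\ell-1}$. To check $\V^{*}\in\G(n,k,\ell)$, assume $\langle\mathbf v,\mathbf w\rangle=-2\ell$ for two vectors in $\V^{*}$. Writing $S_v^{\pm},S_w^{\pm}$ for the $\pm1$-supports, this forces $S_v^{-}\subseteq S_w^{+}$ and $S_w^{-}\subseteq S_v^{+}$. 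Letting $m_v=\min\mathrm{supp}(\mathbf v)$ and $m_w=\min\mathrm{supp}(\mathbf w)$, both $-1$-coordinates by construction, the chain $m_v\in S_v^{-}\subseteq S_w^{+}\subseteq\mathrm{supp}(\mathbf w)$ gives $m_v\ge m_w$, and by symmetry $m_v=m_w$; but then $w_{m_v}=+1$ contradicts $m_w\in S_w^{-}$.

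For the upper bound I would adapt the Frankl shifting technique to $\{0,\pm1\}$-vectors: for $1\le i<j\le n$ define $S_{ij}$ to replace $\mathbf v$ by the vector obtained by swapping coordinates $i$ and $j$ whenever doing so strictly increases coordinate $i$ under the order $+1>0>-1$, unless the swap collides with a vector already in the family. The critical step is to verify that $S_{ij}$ preserves membership in $\G(n,k,\ell)$, which reduces to a case analysis on coordinates $i$ and $j$ of a pair of vectors showing that creating a scalar product of $-2\ell$ forces some preexisting pair to have already had that value. With the family $\V$ assumed shifted, split $\V=\V_{-}\cup\V_{+}$ according to whether $\min\mathrm{supp}(\mathbf v)$ is $-1$ or $+1$. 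The bound $|\V_{-}|\le\binom{n}{k+\ell}\binom{k+\ell-1}{\ell-1}$ is immediate since $\V_{-}\subseteq\V^{*}$. For $\V_{+}$, shiftedness together with the absence of scalar product $-2\ell$ should pin the $-1$-supports of its vectors into a common window $T\subseteq[n]$ of size at most $2\ell$: if two vectors in $\V_{+}$ had $-1$-supports too spread out, a further shift would manufacture a pair realizing the forbidden scalar product. Counting the choice of $T$, the sign pattern on $T$, a distinguished first positive coordinate forced by $\V_{+}$, and the remaining $k-\ell-1$ positive positions outside $T$ then yields the second term $\binom{n}{2\ell}\binom{2\ell}{\ell}\binom{n-2\ell-1}{k-\ell-1}$.

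For the exact value when $2k\le n\le 3k-\ell$, the $+1$-star $\{\mathbf v\in\V(n,k,\ell):v_1=+1\}$ has size $\binom{n-1}{k+\ell-1}\binom{k+\ell-1}{\ell}$ and lies in $\G(n,k,\ell)$ (the $+1$ at position $1$ contributes $+1$ to every scalar product while the remaining coordinates form a pair in $\V(n-1,k-1,\ell)$ with minimum possible scalar product $-2\ell$). This construction outsizes $\V^{*}$ precisely in this regime (the crossover is at $n\ell=k(k+\ell)$). To match it from above, I would rerun the shifted analysis with a finer splitting by the value of $v_1$: vectors with $v_1=0$ can be pushed to have $v_2=+1$ by a second round of shifting, vectors with $v_1=-1$ form a structured subfamily controlled by induction on $n+k+\ell$, and the small value of $n$ forces the total to collapse to the star size. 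The main obstacle will be the $\V_{+}$ analysis at the heart of the upper bound: the scalar product $-2\ell$ can be realized by several substantially different configurations, and localizing all $-1$-supports into a common $2\ell$-window (and, in the small-$n$ regime, pinning one further distinguished coordinate) requires an argument more delicate than the one-dimensional shifting machinery that suffices for ordinary $t$-intersecting set families.
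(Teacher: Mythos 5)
This theorem is quoted from \cite{FK11} and is not reproved in the present paper, so there is no in-text argument to compare against; I can only assess your proposal on its own.

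Your lower-bound construction and verification are correct and complete. The family $\V^{*}$ of vectors whose minimal support element carries a $-1$ does have size $\binom{n}{k+\ell}\binom{k+\ell-1}{\ell-1}$, and the argument that $\langle\mathbf v,\mathbf w\rangle=-2\ell$ forces $S_v^-\subseteq S_w^+$, $S_w^-\subseteq S_v^+$ is exactly right (the extremal scalar product is achieved only when the two $-1$-supports are disjoint and each sits inside the other vector's $+1$-support, and in addition the $+1$-supports are disjoint); the $m_v\ge m_w$, $m_w\ge m_v$ contradiction then closes cleanly. Your computation that the star $\{v_1=+1\}$ has size $\binom{n-1}{k+\ell-1}\binom{k+\ell-1}{\ell}$ and dominates $\V^{*}$ for $n\ell\le k(k+\ell)$, and hence throughout $2k\le n\le 3k-\ell$ since $3k-\ell\le k(k+\ell)/\ell$ is equivalent to $(k-\ell)^2\ge 0$, is also correct.

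The gap is in the upper-bound half, and it is a real one rather than a routine omission. Having reduced to shifted families and split $\V=\V_-\cup\V_+$ with $|\V_-|\le|\V^{*}|$, everything hinges on your assertion that for shifted $\V\in\G(n,k,\ell)$ the $-1$-supports of all vectors in $\V_+$ lie inside a single window $T$ of size $2\ell$, together with a ``distinguished'' extra fixed position. You do not supply this argument, and the brief justification offered (``a further shift would manufacture a pair realizing the forbidden scalar product'') is not a proof: a shifted family is already closed under shifts, so invoking ``further shifts'' needs to be converted into a concrete combinatorial statement about the profile of a \emph{single} vector in a shifted family avoiding $-2\ell$, of the sort the present paper extracts in Lemma~\ref{lemfra} (any such vector must have $\lambda<0$ or a prefix more than half full of $+1$'s). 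In fact the shape of the second term, $\binom{n}{2\ell}\binom{2\ell}{\ell}\binom{n-2\ell-1}{k-\ell-1}$, is what one gets by enumerating vectors whose $\pm1$-entries in some initial prefix are perfectly balanced and whose remaining $k-\ell$ positive entries begin at a forced next coordinate; deriving that structural description from shiftedness plus avoidance of $-2\ell$ is precisely the content you would need and have not supplied. The exact determination in the range $2k\le n\le 3k-\ell$ inherits the same gap: you identify the right extremal family, but the matching upper bound is asserted to ``collapse'' without an argument. As written, only the lower bounds \eqref{eq000} (left inequality) and the $\ge$ direction of \eqref{eq111} are established.
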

The difficulty of the problem probably comes from the fact that there are two completely different extremal constructions, each being the best in a certain range. The first construction is EKR-type: simply take all vectors having $1$ in the first coordinate. It is extremal in the settings of \eqref{eq001} and \eqref{eq111}. The other construction is inductive: take the best construction for $n$ and append $0$ to all vectors. Then add all vectors that have $-1$ in the $(n+1)$'st coordinate. It is extremal, e.g., in the setting \eqref{eq002}.

Recently, our attention was drawn again to $g(n,k,\ell)$ because of a connection with the following problem:\footnote{See  \cite{FK5} for more details} for integers $n,s,x$, find the largest value of $|\ff|$, where $\ff$ is a family of $\{0,\pm 1\}$-vectors in $\R^n$ with exactly $s$ nonzero coordinates for an integer $x$ satisfies $\langle \mathbf v,\mathbf w\rangle \ge x$ for any $\mathbf v,\mathbf w\in \ff$. Interestingly, the answer to this question for odd negative $x$ depends on $g(n,s+\frac{x-1}2, \frac{1-x}2)$. In short, an answer to a $t$-intersecting-type question for one type of vectors depends on the answer to the $1$-intersecting-type question for another type of vectors.

One unsatisfying aspect of \eqref{eq000} is that the error term depends on $n$. One of the main results in this paper is the following theorem in the spirit of \eqref{eq002}.
\begin{thm}\label{thmmain1}
  We have \begin{equation}\label{eqplus} g(n+1,k,\ell)-g(n,k,\ell) = {n\choose k+\ell-1}{k+\ell-1\choose \ell-1}\end{equation}
  in each of the following cases:
  \begin{itemize}
    \item $n\ge 5k^2$ and $k>\ell+1$;
    \item $n\ge 2k^3$ and $k = \ell+1$.
  \end{itemize}
\end{thm}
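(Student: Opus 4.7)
My strategy would prove matching lower and upper bounds for $g(n+1,k,\ell)-g(n,k,\ell)$.

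For the lower bound, I would apply the inductive construction sketched after Theorem~\ref{fkbar}. Take an extremal family $\V^*\in\G(n,k,\ell)$, append $0$ in position $n+1$ to every vector, and adjoin all vectors of $\mathcal V(n+1,k,\ell)$ with $-1$ in position $n+1$. Internal scalar products of the added vectors are at least $-2\ell+3$, and cross products between added vectors and $\V^*\times\{0\}$ are at least $-2\ell+2$, so the resulting family lies in $\G(n+1,k,\ell)$ and has $g(n,k,\ell)+\binom{n}{k+\ell-1}\binom{k+\ell-1}{\ell-1}$ elements, furnishing the lower bound.

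For the upper bound, given an extremal family $\V\in\G(n+1,k,\ell)$, I would partition $\V=\V_0\sqcup\V_+\sqcup\V_-$ by the value at position $n+1$. Since $\V_0|_{[n]}\in\G(n,k,\ell)$ and $\V_-|_{[n]}\subset\mathcal V(n,k,\ell-1)$, we get $|\V_0|\le g(n,k,\ell)$ and $|\V_-|\le M:=\binom{n}{k+\ell-1}\binom{k+\ell-1}{\ell-1}$. Writing $L_0:=g(n,k,\ell)-|\V_0|$ and $L_-:=M-|\V_-|$, the target upper bound is equivalent to the key inequality $|\V_+|\le L_0+L_-$.

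To establish this key inequality I would combine a blocking / double-counting argument with structural information about extremal families. For each $\mathbf w\in\V_+$ with $[n]$-restriction $(A,B)\in\mathcal V(n,k-1,\ell)$, the condition $\langle\mathbf w,\mathbf u\rangle>-2\ell$ for all $\mathbf u\in\V_-$ translates into the exclusion of the $\binom{k-1}{\ell-1}\binom{n-k-\ell+1}{k-\ell}$ specific vectors of $\mathcal V(n,k,\ell-1)$ characterised by $B'\subset A,\ A'\supset B,\ A'\cap A=\emptyset$ from $\V_-|_{[n]}$; a reverse count of multiplicity $\binom{k}{\ell}\binom{n-k-\ell+1}{k-\ell}$ yields the blocking bound $|\V_+|\le(k/\ell)L_-$ by double counting. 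This alone only gives $|\V|\le g(n,k,\ell)+M+((k-\ell)/\ell)L_-$, so I would additionally analyse the interaction of $\V_+$ with $\V_0$: each $\mathbf w\in\V_+$ blocks $\binom{k-1}{\ell}\binom{n-k-\ell+1}{k-\ell}$ vectors of $\mathcal V(n,k,\ell)$ from $\V_0$. A stability statement for $\G(n,k,\ell)$ --- asserting that for $n$ in the given range the near-extremal families sit close to the inductive construction, so that excluding any substantial set of blocked vectors forces a proportional drop in the attainable maximum --- converts this blocking into a lower bound $L_0\gtrsim((k-\ell)/\ell)L_-$, which exactly closes the gap.

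The principal obstacle is the stability statement, which requires precise control over the structure of near-extremal families of $\G(n,k,\ell)$ and forces $n$ to exceed a polynomial in $k$ for the argument to close. The case $k=\ell+1$ is the most delicate because the binomial factor $\binom{n-k-\ell}{k-\ell-1}=1$ degenerates, collapsing the $\V_0$-side blocking reach by a factor of order $n$ and thereby necessitating the stricter hypothesis $n\ge 2k^3$ rather than $n\ge 5k^2$.
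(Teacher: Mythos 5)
Your lower bound is correct (the inductive construction, same as the paper's), and the reduction of the upper bound to $|\V_+|\le L_0+L_-$ is sound, though the paper actually proves the stronger statement $|\V_+|+|\V_-|\le M$, discarding the slack $L_0$ in $\V_0$ altogether. Your ``naive'' blocking bound $|\V_+|\le(k/\ell)L_-$ is also correct as far as it goes: the bipartite graph between $\mathcal V(n,k-1,\ell)$ and $\mathcal V(n,k,\ell-1)$ on scalar-product-$(-2\ell)$ pairs is biregular with degree ratio $k/\ell$, and Lemma~\ref{lembipaver} gives exactly this.

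The genuine gap is the ``stability statement for $\G(n,k,\ell)$'' that you invoke to convert the $\V_+$--$\V_0$ blocking into a lower bound $L_0\gtrsim((k-\ell)/\ell)L_-$. This is not a consequence of anything in the paper, is not a known result, and is very unlikely to be provable with current methods: even the exact value of $g(n,k,\ell)$ is unknown (Theorem~\ref{thmmain1} only determines the \emph{increment}), so there is no identified extremal structure for near-extremal families to be ``close to.'' Without it your coefficient $k/\ell>1$ cannot be brought below $1$ and the argument does not close.

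The paper takes a structurally different route precisely to avoid needing any such stability input. It first reduces (via shifting, proved in \cite{FK1}) to shifted families, and then applies the interlacedness Lemma~\ref{lemfra}: every vector in $\bb=\V(n+1^+)$ of a shifted family must either have $|S_+(\mathbf v)\cap[2t-1]|=t$ for some $t$, or have $\lambda(\mathbf v)\le -1$. This dichotomy lets the paper cover $\bb$ by the thin slices $\bb_1^{t,m}$ and $\bb_2^{j,j'}$, and for each slice build a \emph{local} biregular graph against a slice of $\V(n+1^-)$, whose degree ratio is small (of order $(t/k)^t$ for $\bb_1$, and $\ell/k<1$ for $\bb_2$). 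Summing the resulting Lemma~\ref{lembipaver} inequalities gives a total coefficient $\sum_t\frac k\ell(\frac{2t-1}{4k})^t+\ell\alpha(n,k,\ell)+\frac\ell k<1$, which is exactly where the thresholds $n\ge 5k^2$ (resp.\ $n\ge 2k^3$ when $k=\ell+1$, because $\alpha$ has a factor $(n-3k)^{-(k-\ell)}$ that degenerates when $k-\ell=1$) enter. In short: the structural information from shifting replaces the global stability that your proposal would need, and the fine partition of $\bb$ replaces the single global bipartite graph.

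Two minor slips worth flagging: the cross scalar product between an appended vector and one from $\V^*\times\{0\}$ is at least $-2\ell+1$, not $-2\ell+2$ (still $>-2\ell$, so the construction stands); and the degeneracy at $k=\ell+1$ in the paper comes from the exponent $k-\ell$ in $\alpha(n,k,\ell)$, which is a slightly different expression than the binomial you cite, though the intuition is similar.
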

Note that, in particular, this theorem allows to determine the value of $g(n,k,\ell)$ up to an additive function $F(k,\ell)$, independent of $n$. In what follows, we assume that $\ell\ge 2$ and thus $k\ge 3$, since the case $\ell =1$ is resolved by Theorem~\ref{fkjcta}. Hence $2k^3>5k^2$, so the second restriction on $n$ is stronger.
\vskip+0.1cm {\bf Remark. } The actual threshold on $n$ in Theorem~\ref{thmmain1} should probably be $n\ge \frac{(k+\ell-1)(k+\ell)}\ell$: this is the moment when the inductive extremal construction starts to give bigger increment than the EKR-type construction. Although the bounds that we got for $n$ are not so far off, finding the exact range seems to be out of reach for the present methods.

\subsection{Nonnegative scalar product for $\V(n,k,\ell)$}
Let $$m(n,k,\ell):=\max\big\{|\V|: V\subset \V(n,k,\ell), \langle \mathbf v,\mathbf w\rangle \ge 0\text{ for all }\mathbf v,\mathbf w\in \V\big\}.$$
{\bf Construction.} Let $[n] = X\sqcup Y$. Define
$$\V(X,Y):=\big\{v\in \V(n,k,\ell): v_i\ge 0\text{ for } i\in X\text{ and }v_i\le 0\text{ for } i\in Y\big\}.$$
If $\mathbf v,\mathbf w\in \V(X,Y)$ then $\langle \mathbf v,\mathbf w\rangle\ge 0$. Note that $|\V(X,Y)| = {|X|\choose k}{|Y|\choose \ell}$. This quantity is maximized when $|X|\sim \frac k{k+\ell} n$, $|Y|\sim \frac {\ell}{k+\ell}n$. Let us denote $$p(n,k,\ell):=\max_{X\subset [n]} \big|\V(X,[n]\setminus X)\big|.$$
Interestingly, while determining $g(n,k,\ell)$ even for large $n$ is a challenge, one can determine the value of $m(n,k,\ell)$ for large $n$ relatively easily.
\begin{thm}\label{thmmain2}
  $m(n,k,\ell) = p(n,k,\ell)$ holds for $n\ge 3n_0$, where $n_0 = (k+\ell)2^{k+\ell+2}$.
\end{thm}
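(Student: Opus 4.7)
The lower bound $m(n,k,\ell)\ge p(n,k,\ell)$ is immediate from the \textbf{Construction} stated above: any two $\mathbf v,\mathbf w\in\V(X,Y)$ satisfy $v_iw_i\ge 0$ in every coordinate, so $\langle\mathbf v,\mathbf w\rangle\ge 0$, and maximizing $\binom{|X|}{k}\binom{n-|X|}{\ell}$ over $X$ gives $p(n,k,\ell)$.

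For the upper bound, let $\V$ be an extremal family, and for each $i\in[n]$ set $d_\pm(i):=|\{\mathbf v\in\V:v_i=\pm 1\}|$. I would introduce a \emph{majority bipartition} $[n]=X^*\sqcup Y^*$ by $X^*:=\{i:d_+(i)\ge d_-(i)\}$, and aim to show that for $n\ge 3n_0$ every $\mathbf v\in\V$ satisfies $P(\mathbf v)\subseteq X^*$ and $N(\mathbf v)\subseteq Y^*$; this inclusion gives $\V\subseteq\V(X^*,Y^*)$ and therefore $|\V|\le p(n,k,\ell)$. The heuristic is already exact in the $k=\ell=1$ case: the condition $\langle\mathbf v,\mathbf w\rangle\ge 0$ on $\V(n,1,1)$ forbids directed paths of length two in the digraph $\{(i,j):\mathbf e_i-\mathbf e_j\in\V\}$, forcing a source--sink bipartition and recovering $m(n,1,1)=\lfloor n/2\rfloor\lceil n/2\rceil=p(n,1,1)$ with no lower bound on $n$.

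For general $(k,\ell)$ I would proceed by a swap argument. If some $\mathbf v\in\V$ violates the bipartition, say with $v_{i_0}=+1$ for $i_0\in Y^*$, I would replace the $+1$ at $i_0$ with a $+1$ at a fresh position $j\in X^*\setminus\mathrm{supp}(\mathbf v)$, forming a candidate $\mathbf v'$. If this swap can always be performed without introducing a negative scalar product, iterating it decreases the conflict measure $\sum_i\min(d_+(i),d_-(i))$ while preserving $|\V|$, eventually forcing $\V\subseteq\V(X^*,Y^*)$. The only obstruction is the existence of $\mathbf w\in\V$ with $\langle\mathbf v',\mathbf w\rangle<0$; since $\langle\mathbf v,\mathbf w\rangle\ge 0$, such a $\mathbf w$ must have $w_j=-1$ and a restricted sign pattern on $\mathrm{supp}(\mathbf v)\cup\{i_0\}$.

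The main obstacle is certifying that, for $n\ge 3n_0$, a safe destination $j$ always exists. I would classify obstructing witnesses $\mathbf w$ by their signed pattern on $\mathrm{supp}(\mathbf v)\cup\{i_0\}$ (at most $2^{k+\ell+1}$ patterns) and apply the sunflower lemma within each class, extracting a bounded (in terms of $k,\ell$) set of "core obstructions" that rule out at most $n_0=(k+\ell)2^{k+\ell+2}$ candidate destinations in total. Since $n\ge 3n_0$ implies $|X^*|\ge n_0$ or (by the symmetric argument on $-1$-violations) $|Y^*|\ge n_0$, at least one swap direction always admits a safe destination. The delicate part will be the sunflower bookkeeping tailored to the $\{0,\pm 1\}$ signed setting and, crucially, verifying that the iterated swap genuinely terminates and preserves $|\V|$ despite interactions between successive swaps at different positions.
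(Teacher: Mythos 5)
Your lower bound matches the paper's (via the same construction), but your upper bound takes a genuinely different route, and that route has real gaps that are not filled in.

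The paper proves the upper bound by induction on $n$: it establishes $m(n,k,\ell)\le \max\bigl\{p(n,k,\ell),\ \binom{n_0}{k}\binom{n_0}{\ell}+p(n,k,\ell)/2\bigr\}$ for all $n\ge n_0$. If $\V$ is not contained in some $\V(X,Y)$, pick $\mathbf v,\mathbf w\in\V$ with $v_n=1$, $w_n=-1$; then every $\mathbf v'\in\V(n^+)$ must have $S(\mathbf v')\cap S(\mathbf w)\ne\emptyset$ (else $\langle\mathbf v',\mathbf w\rangle=-1$), which bounds $|\V(n^+)|$ by roughly $\frac{k+\ell}{n}|\V(n-1,k-1,\ell)|$; comparing with $p(n-1,k-1,\ell)\ge 2^{-k-\ell}|\V(n-1,k-1,\ell)|$ and using $p(n,k,\ell)-p(n-1,k,\ell)\ge\frac12\bigl(p(n-1,k,\ell-1)+p(n-1,k-1,\ell)\bigr)$ closes the induction. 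This is where the value $n_0=(k+\ell)2^{k+\ell+2}$ comes from, and the factor $3$ then clears the additive $\binom{n_0}{k}\binom{n_0}{\ell}$ slack. Your plan instead is a direct ``compression'' argument: fix a majority bipartition $(X^*,Y^*)$, iteratively swap offending coordinates of each $\mathbf v$, and conclude $\V\subseteq\V(X^*,Y^*)$. That is a legitimate alternative strategy and has the appealing feature that it works with the global structure of $\V$ rather than peeling off one coordinate at a time.

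However, the crux of your argument is left as a sketch precisely at the step where it is most likely to fail, and you acknowledge this yourself. Three concrete problems. First, your swap $\mathbf v\mapsto\mathbf v'$ may collide: if $\mathbf v'\in\V$ already, you cannot add it, and you have said nothing about this case. Second, the potential function $\sum_i\min\bigl(d_+(i),d_-(i)\bigr)$ need not decrease under a single swap: a swap can flip the majority at some coordinate $i$ (i.e., change which side of the dichotomy $d_+(i)\ge d_-(i)$ holds), and even without flips, the set $X^*$ itself is defined by the degrees and so changes as you compress; a standard compression argument requires a fixed target, not one that moves with the family. Third, and most importantly, the claim that a sunflower-type argument on obstruction patterns blocks ``at most $n_0$'' destinations is not justified. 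Obstructions of the form ``$\mathbf w\in\V$ with $w_j=-1$ and $\langle\mathbf v,\mathbf w\rangle=0$'' (or $=1$) are numerous — the family has size on the order of $n^{k+\ell}$ — and the number of blocked $j$'s is bounded by the number of positions where some $\mathbf w$ with a forced pattern on $\mathrm{supp}(\mathbf v)\cup\{i_0\}$ has a $-1$. A sunflower lemma bounds the number of \emph{pairwise intersecting} sets, not the number of coordinates covered by a union of sets with a common core; nothing in your sketch prevents the blocked destinations from covering all of $X^*$. Unless you can give a genuine counting or structural argument for why safe destinations survive, the proposal does not establish the upper bound; the $k=\ell=1$ warm-up works for a special reason (the digraph has no directed path of length two, so bipartiteness is automatic) that does not obviously generalize.
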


\section{Proof of Theorem~\ref{thmmain1}}
\subsection{Shifting}
Given two vectors $\mathbf v=(v_1,\ldots,v_n),\mathbf w=(w_1,\ldots, w_n)\in \R^n$, we say that {\it $\mathbf v$ precedes $\mathbf w$ in the shifting partial order} and write $\mathbf v\prec \mathbf w$, if one can obtain $\mathbf v$ from $\mathbf w$ via a sequence of $(i\leftarrow j)$-shifts. For a vector $\mathbf w$ and $i<j$, the $(i\leftarrow j)$-shift is an operation that replaces $w_i,w_j$ with $\max\{w_i,w_j\}$ and $\min\{w_i,w_j\}$, respectively. Note that, whenever for two vectors $\mathbf v,\mathbf w$ we have $\mathbf w\prec \mathbf v$, then $\{v_1,\ldots,v_n\} = \{w_1,\ldots,w_n\}$ as multisets.

We call a family $\V\subset \R^n$ {\it shifted} if, whenever $\mathbf w\in \V$ and $\mathbf v\prec \mathbf w$, we have $\mathbf v\in\V$. \\



In \cite{FK1}, we have shown that the value of $g(n,k,\ell)$ is attained on a shifted family, and thus, in what follows, we restrict our attention to shifted subfamilies of $\V(n,k,\ell)$ only.\\

\subsection{Interlacedness}
Let us define the {\it degree of interlacedness} $\lambda(\mathbf w)$ of a $\{0,\pm 1\}$-vector $\mathbf w=(w_1,\ldots,w_n)$ as the minimum of $\sum_{j\ge i}w_j$, where the minimum is taken over all $i\le n$.
Let $S(\mathbf v)$ be the set of all $i$ such that $v_i\ne 0$. Let the {\it negative support $S_-(\mathbf v)$ of $\mathbf v$}   be the set of all $i$ such that $v_i=-1$. Define the {\it positive support} $S_+(\mathbf v)$ similarly.
\begin{lem}\label{lemfra}
  Suppose that $\mathbf w\in \V(n,k,\ell)$, $k\ge \ell$, and
  \begin{itemize}
    \item[(i)] $\lambda(\mathbf w)\ge 0$;
    \item[(ii)] $\big|S_+(w)\cap [2t-1]\big| \le t-1$ for all $t\ge 1$.
  \end{itemize}
  Then there exists $\mathbf v\in\V(n,k,\ell)$ such that $\mathbf v\prec \mathbf w$ in shifting order and $\langle \mathbf v,\mathbf w\rangle  = -2\ell$.
\end{lem}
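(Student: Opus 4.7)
The plan is to construct $\mathbf v$ explicitly from $\mathbf w$ and then verify $\mathbf v \prec \mathbf w$ using (i) and (ii). First I unpack the scalar product condition: $\langle \mathbf v, \mathbf w\rangle = -2\ell$ together with $|S_-(\mathbf v)|=\ell$ forces $S_-(\mathbf w) \subseteq S_+(\mathbf v)$, $S_-(\mathbf v) \subseteq S_+(\mathbf w)$, and the two positive (resp.\ negative) supports to be disjoint. Writing $S_+(\mathbf w) = \{p_1 < \cdots < p_k\}$ and $S_-(\mathbf w) = \{q_1 < \cdots < q_\ell\}$, I set $S_-(\mathbf v) := \{p_{k-\ell+1}, \ldots, p_k\}$ (the top $\ell$ positives of $\mathbf w$) and $S_+(\mathbf v) := S_-(\mathbf w) \cup T$, where $T$ is the set of the $k-\ell$ smallest indices $i$ with $w_i = 0$. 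For $T$ to fit inside $[1,p_k]$ one needs $k-\ell$ zero coordinates in this range: condition (ii) gives $p_k \geq 2k$, and condition (i) applied at $p_k+1$ forces all of $S_-(\mathbf w)$ into $[1,p_k]$, leaving $p_k - k - \ell \geq k-\ell$ zero coordinates available.

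To establish $\mathbf v \prec \mathbf w$ I use the standard characterization: the sorted $+1$-positions of $\mathbf v$ are componentwise $\leq (p_1,\ldots,p_k)$ and the sorted $-1$-positions of $\mathbf v$ are componentwise $\geq (q_1,\ldots,q_\ell)$. The $-1$ inequality follows quickly: applying (i) at $q_j$ gives at least $\ell-j+1$ positives of $\mathbf w$ inside $[q_j,n]$, hence $p_{k-\ell+j} \geq q_j$, with strict inequality because $S_+(\mathbf w)$ and $S_-(\mathbf w)$ are disjoint. For the $+1$ inequality I verify $|S_+(\mathbf v)\cap[1,p_i]| \geq i$ for each $i$ by splitting into two regimes. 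If $[1,p_i]$ contains at most $k-\ell$ zero coordinates of $\mathbf w$, then $T\cap[1,p_i]$ captures all of them, and counting $[1,p_i]$ by coordinate type gives $|S_+(\mathbf v)\cap[1,p_i]| = p_i - i$, which is $\geq i$ by (ii). Otherwise $T$ contributes its full $k-\ell$ inside $[1,p_i]$, and I need $|S_-(\mathbf w)\cap[1,p_i]| \geq i-(k-\ell)$; this comes out of applying (i) to the suffix $[p_i+1,n]$.

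The main conceptual step is the perhaps counter-intuitive choice to place $S_-(\mathbf v)$ on the \emph{top} $\ell$ positives of $\mathbf w$ rather than the bottom $\ell$: this is precisely what lets condition (i) at $q_j$ deliver the $-1$ shifting inequality cleanly, while leaving the small-index positions open for $T$ so that condition (ii) can push through the $+1$ shifting inequality. Everything else is elementary counting around the case split at $p_i$, and no serious obstacle remains once the supports of $\mathbf v$ are fixed as above.
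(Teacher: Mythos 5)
Your proof is correct, but it takes a genuinely different route from the paper. The paper constructs the target vector by an explicit sequence of shifts through an intermediate vector $\mathbf u$: each $-1$-position $q_i$ of $\mathbf w$ (processed from the largest down) is greedily swapped with the smallest still-unused $+1$-position exceeding $q_i$, and then the surviving $+1$'s are shifted down to the $k-\ell$ smallest $0$-positions; the two Claims in the paper's proof verify that each of these individual $(i\leftarrow j)$-shifts is feasible, so $\mathbf v\prec\mathbf w$ is obtained by construction. You instead fix the endpoint $\mathbf v$ in one stroke (choosing $S_-(\mathbf v)$ to be the \emph{top} $\ell$ $+1$-positions of $\mathbf w$, whereas the paper's greedy choice typically yields a different $-1$-support) and then certify $\mathbf v\prec\mathbf w$ via the majorization characterization of the shifting order, namely that sorted $+1$-positions are componentwise $\le$ and sorted $-1$-positions componentwise $\ge$. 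Your verification that the two componentwise inequalities hold is clean: the $-1$ side follows from (i) at each $q_j$ exactly as you say (this is essentially the first half of the paper's Claim~\ref{clasim}, reindexed), and the case split on whether $[1,p_i]$ contains $\le k-\ell$ or $>k-\ell$ zeros of $\mathbf w$, using (ii) and (i) respectively, closes the $+1$ side. The trade-off is that you lean on the dominance characterization of $\prec$ as ``standard'' --- it is correct and well known for this lattice, but the paper deliberately avoids invoking it by exhibiting the shifts directly; if you wanted your argument to be fully self-contained you would need to either cite a reference for that characterization or sketch its proof. Otherwise the argument is complete, and arguably more transparent than the paper's, since the target $\mathbf v$ and the role of hypotheses (i) and (ii) are visible at once.
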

\begin{proof}
  Let $q_1>q_2>\ldots>q_{\ell}$ be the indices with $w_{q_i} = -1$ and $p_1>p_2>\ldots>p_k$ be the indices with $w_{p_i} = 1$. Let $r_1<r_2<\ldots<r_k$ be the first $k$ indices not equal to $1$.
  \begin{cla}\label{clasim}
    $q_i<p_i$ for $i = 1,2,\ldots,\ell$ and $p_{k-i+1}>r_i$ for $i\in [k]$.
  \end{cla}
  \begin{proof}
    Indeed, if $q_i>p_i$ then $\sum_{j\ge q_i} w_j \le -1$,
    contradicting (i). Similarly, if $p_{k-i+1}<r_i$ then $\big|S_+(\mathbf w)\cap [p_{k-i+1}]\big|= i$, while $p_{k-i+1}\le 2i-1$, which contradicts (ii) for  $t=i$.
  \end{proof}
  Let $\mathbf u$ be the vector obtained from $\mathbf w$ by switching each of the $\ell$ positions $q_i$, $1\le i\le \ell$ with certain positions $p_{s(i)}$. Namely, for each $i=1,\ldots, \ell$ we perform a $(q_i\leftarrow p_{s(i)})$-shift, where $p_{s(i)}$ is the smallest such that, first, $p_{s(i)}>q_i$ and, second, $p_{s(i)}$ was not switched with some $q_{i'}$ for $i'<i$. (The other coordinates remain intact.) Thus, for $j=q_i$ or $p_{s(i)}$ we have $u_j = -w_j$. Note that $s\le i$ due to Claim~\ref{clasim} and thus such a shift is possible, and $\mathbf u\prec \mathbf w$.

  Let $J = \{j_1,\ldots, j_{k-\ell}\}$, $j_1<\ldots< j_{k-\ell}$, be the set of the first $k-\ell$ indices $j$ such that $w_j=0$. Note that this sequence is obtained from $r_1< \ldots<r_k$ by taking the first $k-\ell$ indices  that do not belong to $\{q_1,\ldots, q_{\ell}\}$. Let also $J' = \{j'_1,\ldots, j'_{k-\ell}\}$, $j_1<\ldots< j_{k-\ell}$, be the set of indices $j'$ such that $w_{j'} = u_{j'}=1$. These are the indices that were not touched when passing from $\mathbf w$ to $\mathbf u$.
  \begin{cla}\label{clasim2}
  $j_i<j'_i$ for any $i=1,\ldots, k-\ell$.
  \end{cla}
\begin{proof}
Arguing indirectly, assume that $j_i>j'_i$ for some $i$. Then, clearly, $|J'\cap [j'_i]|>\big|J\cap [j'_i]\big|$. By the definition of $\mathbf u$, for each $s\in [j'_i-1]$ such that $w_s = -1$, we found the smallest not yet used index $s'>s$, such that $w_{s'} = 1$, and switched them. Therefore, for any such coordinate $s$, we have $s'<j_i'$, since otherwise $j'_i$ should have been chosen as $s'$ and consequently could not belong to $J'$. In other words, this implies that there is a pairing of $+1$- and $-1$-coordinates in $[j'_i-1]\setminus J'$, which implies that the following holds:
\begin{equation}\label{zerosum}\sum_{s\in [j'_i]\setminus (J\cup J')}w_s = 0.\end{equation}
Each of $w_i\in [j'_i]\setminus (J\cup J')$ is either $+1$ or $-1$ and, using \eqref{zerosum}, the number of $+1$'s and $-1$'s  in $[j'_i]\setminus (J\cup J')$ is equal. Together with $|J'\cap [j'_i]|>\big|J\cap [j'_i]\big|$, it gives that more than a half of indices $i\in[j'_i]$  satisfy $w_i=1$, a contradiction with (ii) for $j'_i$ playing the role of $t$.
\end{proof}
Now we are in a position to finish the proof of the lemma.
Define $\mathbf v$ by applying $(j_i\leftarrow j'_i)$-shifts to $\mathbf u$ for all $i = 1,\ldots, k-\ell$. 
By the second claim, we get $\mathbf v\prec \mathbf u\prec \mathbf w.$ Moreover, it should be clear that $\langle \mathbf v,\mathbf w\rangle  = -2\ell$.
  \end{proof}

\subsection{Proof of Theorem~\ref{thmmain1}}
  It is easy to see the ``$\ge$'' part of the statement. Indeed, take the family $\V\in \G(n,k,\ell)$ of size $g(n,k,\ell)$. Extend it to a family $\V'\in\G(n+1,k,\ell)$ by appending $0$ as the $n+1$-th coordinate to each vector in $\V$ and adding all vectors from $\V(n,k,\ell)$ that have $-1$ on the $(n+1)$'st coordinate.\footnote{This is the inductive construction from the previous section.} It is easy to see that the number of the vectors in the latter group is ${n\choose k+\ell-1}{k+\ell-1\choose \ell-1}$.\\

  We go on to the ``$\le$'' part. Take any $\V\in \G(n+1,k,\ell)$. We may w.l.o.g. assume that $\V$ is shifted, and thus, by Lemma~\ref{lemfra}, one of the following two properties must hold for any $\mathbf v\in \V$:
  \begin{align}\label{eqkup} \text{either }&\big|S_+(\mathbf v)\cap [2t-1]\big| = t\text{ for some }t\ge 1\\\label{eqfra}\text{or }& \lambda(\mathbf v)\le -1. \end{align}
   Partition $\V$ into subcollections $\aaa:=\V(n+1^-)$, $\V(n+1^0)$, and $\bb:=\V(n+1^{+})$ of vectors from $\V$ that have $-1$, $0$, and $1$ on the $n+1$'st coordinate position, respectively. Clearly, $|\V(n+1^0)|\le g(n,k,\ell)$. Thus, to prove the theorem, it is sufficient to show that
  \begin{equation}\label{eqmainineq}
    |\aaa|+|\bb|\le {n\choose k+\ell-1}{k+\ell-1\choose \ell-1} = \big|\V(n+1,k,\ell)(n+1^-)\big| = \big|\V(n,k,\ell-1)\big|.
  \end{equation}
Put
$$ i'(\mathbf v) := \max\Big\{i: \sum_{j\ge i} v_j = -1\Big\}, \ \ \ i(\mathbf v):=\big|S_-(\mathbf v)\cap [i'(\mathbf v),n+1]\big|.$$
Note that $i'(\mathbf v)$ is defined for any vector in $\bb$ not satisfying \eqref{eqkup} due to the fact that then \eqref{eqfra} must hold. If this is the case, note also that $2\le i(\mathbf v)\le \ell$ for any $\mathbf v \in \bb$, since $v_{n+1} = 1$. We have $|S_+(\mathbf v)\cap [i'(\mathbf w),n+1]| = i(\mathbf v)-1$ by the definition of $i'(\mathbf v)$. 
We cover $\bb$ by the following families $\bb_1^t,\bb_2^{j,j'}$:
$$\bb_1^{t,m}:=\Big\{\mathbf v \in \bb: 
\mathbf v \text{ satisfies }\eqref{eqkup}\text{ with } t, \text{ and } \big|S_-(\mathbf v)\cap [t]\big| = m \Big\},$$
$$\bb_2^{j,j'}:=\big\{\mathbf v\in \bb\setminus \bb_1: i(\mathbf v)=j,i'(\mathbf v)=j'\big\}.$$
Note that $\bb_2^{j,j'}$ are disjoint for different $j,j'$.

The idea behind such covering is quite simple: for each of these parts, we can find a subfamily of $\V(n+1,k,\ell)(n+1^-)$ and a suitable bipartite graph in order to obtain a certain sum-type inequality. The families $\bb_1^t$ and $\bb_2^{j,j'}$ together are small, and we shall conclude that for the optimal choice of $\V$, one has to take $\aaa = \V(n+1,k,\ell)(n+1^-)$.\\

We first obtain a sum-type inequality for each of $\bb_1^{t,m}$. 
Consider the following two subfamilies of $\V(n+1,k,\ell)$:

\begin{small}\begin{align*}\mathcal Y^{t,m}:=&\Big\{\mathbf v\in \V(n+1,k,\ell)(n+1^+): |S_-(\mathbf v)\cap [2t-1]|=m, |S_+(\mathbf v)\cap [2t-1]| = t\}\Big\},\\
\mathcal X^{t,m}:=&\Big\{\mathbf v\in \V(n+1,k,\ell)(n+1^-): |S_+(\mathbf v)\cap [2t-1]|=m, |S_-(\mathbf v)\cap [2t-1]| = \max\{t-(k-\ell),0\}\Big\}.\end{align*}\end{small}
That is, $\bb_1^{t,m} = \bb\cap \mathcal Y^{t,m}$. Put
$\aaa^{t,m}:=\mathcal X^{t,m}\cap \aaa$.
Note that $\mathcal X^{t,m_1}$ and $\mathcal X^{t,m_2}$, and thus $\aaa^{t,m_1}$ and $\aaa^{t,m_2}$, are disjoint if $m_1\ne m_2$. Define a bipartite graph $G^{t,m}$ between $\mathcal X^{t,m}$ and $\mathcal Y^{t,m}$ that connects the vectors that have scalar product $-2\ell$. It is not difficult to check that this graph is non-empty (i.e., that there are  pairs $\mathbf v\in \mathcal X^{t,m}$, $\mathbf w\in\mathcal Y^{t,m}$ with $\langle \mathbf v,\mathbf w\rangle = -2\ell$). Moreover, the graph is clearly biregular due to the symmetry in the definition.

We shall use the following lemma, which can be proved by a simple averaging argument.
\begin{lem}\label{lembipaver}
  If $G = (A\cup B,E)$ is a bipartite biregular graph, then any independent set $I\subset A\cup B$ satisfies $|I\cap B|+\alpha |I\cap A|\le \alpha |A|$ for any $\alpha\ge\frac{|B|}{|A|}$.
\end{lem}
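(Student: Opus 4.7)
The plan is to exploit biregularity through a one-step double-counting argument; there is no substantial obstacle here, which is why the authors label it ``a simple averaging argument.''

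First I would introduce the common degrees $d_A, d_B$ of the two sides, using biregularity. Counting edges from both sides of the bipartition immediately gives the identity $|A| d_A = |B| d_B$, equivalently $d_A/d_B = |B|/|A|$. Then, setting $I_A = I \cap A$ and $I_B = I \cap B$, independence of $I$ forces every edge incident to a vertex in $I_B$ to have its other endpoint in $A \setminus I_A$ — this is the one and only combinatorial input. Counting these edges from the $I_B$-side gives $|I_B|\, d_B$, while bounding them from the $A \setminus I_A$-side gives at most $(|A| - |I_A|)\, d_A$, so
\[
|I_B|\, d_B \;\le\; (|A| - |I_A|)\, d_A.
\]
Dividing by $d_B$ and substituting $d_A/d_B = |B|/|A|$ rearranges to
\[
|I_B| + \frac{|B|}{|A|}\, |I_A| \;\le\; |B|,
\]
which is precisely the desired inequality in the borderline case $\alpha = |B|/|A|$.

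To upgrade this to arbitrary $\alpha \ge |B|/|A|$, I would write $\alpha = |B|/|A| + \beta$ with $\beta \ge 0$ and add $\beta |I_A|$ to both sides; using the trivial bound $|I_A| \le |A|$ on the right converts $|B|$ into $|B| + \beta |A| = \alpha|A|$, giving $|I_B| + \alpha |I_A| \le \alpha |A|$. The only care needed is that increasing $\alpha$ strengthens the left-hand side but the slack $|A| - |I_A|$ on the right absorbs exactly the right amount, which is what makes the choice $\alpha \ge |B|/|A|$ the natural threshold for the statement. Since every step is a line of elementary manipulation, the real ``difficulty'' is only one of packaging — leaving $\alpha$ free is convenient because later applications will invoke the lemma with an $\alpha$ read off from the sizes of $\mathcal X^{t,m}$ and $\mathcal Y^{t,m}$.
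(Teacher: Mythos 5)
Your proof is correct and is exactly the ``simple averaging argument'' the paper alludes to without writing out: counting edges between $I\cap B$ and $A\setminus I$ gives the borderline case $\alpha=|B|/|A|$, and the trivial bound $|I\cap A|\le|A|$ lifts it to all $\alpha\ge|B|/|A|$. One small caveat: both the argument and the lemma as stated implicitly require the common degree $d_B$ to be positive (for the empty graph the conclusion is false when $B\neq\emptyset$), which is why the authors explicitly verify non-emptiness of $G^{t,m}$ and $G'$ before invoking the lemma.
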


$\aaa^{t,m}\cup \bb^{t,m}_1$ is an independent set in $G^{t,m}$. Moreover,
$$\frac{|\mathcal Y^{t,m}|}{|\mathcal X^{t,m}|} = \frac{{2t-1\choose m}{2t-1-m\choose t}{n-2t+1\choose \ell-m}{n-2t+1-\ell+m\choose k-1-t}}{{2t-1\choose m}{2t-1-m\choose \max\{t-k+\ell,0\}}{n-2t+1\choose k-m}{n-2t+1-k+m\choose \ell-1-\max\{t-k+\ell,0\}}}.$$
Let us consider two cases. First, assume that $\max\{t-k+\ell,0\} = 0$, i.e., $t\le k-\ell$. 
Then we have
$$\frac{|\mathcal Y^{t,m}|}{|\mathcal X^{t,m}|} = \frac{{2t-1-m\choose t}{n-2t+1\choose \ell-m}{n-2t+1-\ell+m\choose k-1-t}}{{n-2t+1\choose k-m}{n-2t+1-k+m\choose \ell-1}} =
{2t-1-m\choose t}\cdot\frac{(k-m)!}{(k-1-t)!}\cdot \frac{(\ell-1)!}{(\ell-m)!} \cdot$$$$\frac{(n-2t+1-k-\ell+1+m)!}{(n-2t+1-k-\ell+1+m+t)!} <(2t-1)^{t}k^{t+1-m}\ell^{m-1}\cdot $$$$\frac 1{(n-2t+2-k-\ell)^{t}}\le \frac k\ell k^{t-m}\ell^m\Big(\frac{2t-1}{4k^2}\Big)^t\le \frac k{\ell}\Big(\frac{2t-1}{4k}\Big)^{t},$$
where the second to last inequality uses that $n\ge 5k^2$ and $k\ge 3$, implying $n-2t+2-k-\ell\ge n-3k\ge 4k^2$, and the last inequality uses $k^{t-m}\ell^m\le k^t$.

Thus, using Lemma~\ref{lembipaver}, we get that for any $t\le k-\ell$ we have
$$|\bb^{t,m}_1|+\frac k{\ell}\Big(\frac{2t-1}{4k}\Big)^{t}|\aaa^{t,m}|\le \frac k{\ell}\Big(\frac{2t-1}{4k}\Big)^{t}|\mathcal X^{t,m}|,$$
and, using that $\mathcal X^{t,m_1}$ and $\mathcal X^{t,m_2}$ for distinct $m_1,m_2$, we infer that
\begin{equation}\label{eqsum10}\Big|\bigcup_{m}\bb^{t,m}\Big|+\frac k{\ell}\Big(\frac{2t-1}{4k}\Big)^{t}|\aaa|\le \frac k{\ell}\Big(\frac{2t-1}{4k}\Big)^{t}\big|\V(n+1,k,\ell)(n+1^-)\big|\end{equation}

Assume next that $t\ge k-\ell+1$ and $m\ge 1$, and thus $\min\{t-k+\ell,0\} = t-k+\ell$.  (Recall that $t\le k$ should always hold.) Let us do an auxiliary estimate:
$$\frac{(t-k+\ell)!(t-1-m+k-\ell)!}{t!(t-1-m)!}= \prod_{i=1}^{k-\ell}\frac{t-1-m+i}{t-k+\ell+i}\le \Big(\frac{k-\ell+1}2\Big)^{k-\ell}.$$
Doing similar calculations as in the previous case, we obtain

$$\frac{|\mathcal Y^{t,m}|}{|\mathcal X^{t,m}|} = \frac{{2t-1-m\choose t}{n-2t+1\choose \ell-m}{n-2t+1-\ell+m\choose k-1-t}}{{2t-1-m\choose t-k+\ell}{n-2t+1\choose k-m}{n-2t+1-k+m\choose k-1-t}} = \frac{(k-m)!(t-k+\ell)!(t-1-m+k-\ell)!}{(\ell-m)!t!(t-1-m)!}\cdot$$
$$\frac{(n-2t+1-2k+1+m+t)!}{(n-2t+1-k-\ell+1+m+t)!}< \frac{k^{k-\ell}\Big(\frac{k-\ell+1}2\Big)^{k-\ell}}{(n-3k)^{k-\ell}}:=\alpha(n,k,\ell).$$

As before, we conclude that
\begin{equation}\label{eqsum11}\Big|\bigcup_{m}\bb^{t,m}_1\Big|+\alpha(n,k,\ell)|\aaa|\le \alpha(n,k,\ell)\big|\V(n+1,k,\ell)(n+1^-)\big|.\end{equation}
Finally, we note that if $t\ge k-\ell+1$ and $m=0$ then $\bb^{t,m}_1\subset \cup_{j,j'}\bb_2^{j,j'}$. Denote $$\mathcal \bb_1':= \bigcup_{m,t\ :\ t\le k-\ell}\bb_1^{t,m}\cup\bigcup_{t\ge k-\ell+1}\bb_1^{t,0}.$$

Next, we obtain a sum-type inequality for  $\bb_2^{j,j'}\setminus \bb_1'$. Note that, for any $\mathbf v\in \bb_2^{j,j'}$, we have $|S_+(\mathbf v)\cap [j'-1]| = k-j+1.$ 
For any  $\mathbf v\in \bb\setminus \bb_1'$ either the property \eqref{eqkup} is not satisfied for $t = j'-1$ or \eqref{eqkup} is satisfied for $t\ge k-\ell+1$ and, additionally, $|S_-(\mathbf v)\cap [j'-1] = 0|$. In the first case, we have $j'-1\ge 2(k-j+1)$ directly. In the second case, we have $j'-1\ge 2(k-\ell+1)-1 = 2(k-j+1)-1$. Thus, in any case, we may assume that
\begin{equation}\label{eqcondj'}
  j'-1\ge 2(k-j+1).
\end{equation}
Remark that either $\bb_2^{j,j'}\subset \bb_1'$ or $\bb_2^{j,j'}\cap \bb_1' = \emptyset$. In what follows, we only work with the latter case. We partition $\bb_2^{j,j'}$ into subfamilies as follows. 
For a vector $\mathbf v\in \bb_2^{j,j'}$, put $\mathbf w(\mathbf v):=\mathbf v|_{[j',n+1]}$. 
For any vector $\mathbf x$, let $\bar {\mathbf x}$ be the vector defined by $\bar x_i = -x_i$ for each coordinate. Let $\V(n+1,k,\ell,\mathbf w)$ be the subfamily of vectors $\mathbf v\in \V(n+1,k,\ell)$ with $\mathbf w(\mathbf v)=\mathbf w$, and put $\bb_2^{j,j'}(\mathbf w) = \V(n+1,k,\ell,\mathbf w)\cap \bb_2^{j,j'}$. Also put 
$\aaa(\bar{\mathbf w}):=\V(n+1,k,\ell,\bar{\mathbf w})\cap \aaa$. We note that, for different $\mathbf w$ (as well as for different $j,j'$), the families $\aaa(\bar{\mathbf w})$ are disjoint.

In what follows, we think of both $\bb_2^{j,j'}(\mathbf w)$ and $\aaa(\bar{\mathbf w})$ as of families of vectors in $\{0,\pm 1\}^{[j'-1]}$.
We have
\begin{align*}\bb_2^{j,j'}(\mathbf w)&\subset \V(j'-1, k-j+1, \ell-j)=:\mathcal U_1,\\ \aaa(\bar{\mathbf w})&\subset \V(j'-1, k-j, \ell-j+1):=\mathcal U_2.\end{align*}
Define a bipartite graph $G'$ between $\mathcal U_1$ and $\mathcal U_2$ by joining by an edge any two vectors $\mathbf u, \mathbf v$ that have scalar product $-2\ell+2j-1$ (the vectors from $\{0,\pm 1\}^{n+1}$ that ``shorten'' to $\mathbf u, \mathbf v$ then have scalar product $-2\ell$).  The set $\bb_2^{j,j'}(\mathbf w)\cup \aaa(\bar{\mathbf w})$ is independent in $G'$. Moreover, $G'$ is biregular and of non-zero degree due to \eqref{eqcondj'}. 
Next,
$$\frac{|\mathcal U_2|}{|\mathcal U_1|} = \frac{{j'-1\choose k+\ell-2j+1}{k+\ell-2j+1\choose \ell-j+1}}{{j'-1\choose k+\ell-2j+1}{k+\ell-2j+1\choose \ell-j}}= \frac{k-j+1}{\ell-j+1}\ge \frac {k}{\ell}.$$
This holds for any $\mathbf w$ and $j,j'>0$. Thus, using Lemma~\ref{lembipaver} and summing over all $j,j'$ such that $\bb_2^{j,j'}\cap \bb_1' = \emptyset$ and vectors $\mathbf w$, we get that
\begin{equation}\label{eqsum2} \sum_{j,j'>0}\big|\bb_2^{j,j'}\setminus \bb_1'\big|+\frac{\ell}{k}|\aaa|\le \frac{\ell}{k}\big|\V(n+1,k,\ell)(n+1)^+\big|.\end{equation}

All we are left is to sum up equations \eqref{eqsum10} for $1\le t\le k-\ell$, \eqref{eqsum11} for $k-\ell+1\le t\le k$, and \eqref{eqsum2}. We get that
\begin{small}\begin{equation*}\label{eqsum3}
|\bb|+\Big(\sum_{t=1}^{k-\ell} \frac k{\ell}\Big(\frac{2t-1}{4k}\Big)^{t} + \ell\alpha(n,k,\ell)+ \frac{\ell}k\Big)|\aaa|\le \Big(\sum_{t=1}^{k-\ell} \frac k{\ell}\Big(\frac{2t-1}{4k}\Big)^{t} + \ell\alpha(n,k,\ell)+ \frac{\ell}k\Big)|\V(n,k,\ell-1)|,
\end{equation*}\end{small}
and we are done as long as $\sum_{t=1}^{k-\ell} \frac k{\ell}\big(\frac{2t-1}{4k}\big)^{t} + \ell\alpha(n,k,\ell)+ \frac{\ell}k\le 1$. We note that the first sum is at most $2\frac k{\ell} \frac 1{4k} = \frac 1{2\ell}$ since $\frac{2t-1}{4k}<1/2$. For $k-\ell\ge 2$, using $n\ge 5k^2$, we get that $\ell\alpha(n,k,\ell)\le \ell\frac{k-\ell+1}{8k}\Big)^{k-\ell}\le \frac 1{7k}$. Thus, the coefficient in question is at most $\frac{1}{2\ell}+\frac {7\ell+1}{7k}<1,$ where the last inequality is valid for any $1\le \ell \le k-2$, $k\ge 3$. For $k-\ell = 1$, using $n\ge 2k^3$, we have $\ell\alpha(n,k,\ell)<\frac 1{2k}$, moreover, the first term in the coefficient in question is at most $\frac 1{4\ell} = \frac 1{4k-4}$. Summing up, the coefficient in question is $\frac 1{4k-4}+\frac 1{2k}+\frac{k-1}k = \frac 1{4k-4}+1-\frac 1{2k}<1$. 

\section{Proof of Theorem~\ref{thmmain2}}
  Let us prove the following statement by induction on $n$: for $n\ge n_0$, \begin{equation}\label{sick} m(n,k,\ell)\le \max\Big\{p(n,k,\ell), {n_0 \choose k}{n_0\choose \ell}+p(n,k,\ell)/2\Big\}.\end{equation}

  It clearly holds for $n = n_0$ since the cardinality of $\V(n_0,k,\ell)$ is at most the second term in the maximum.

  Now let $n>n_0$ and $\V\in V(n,k,\ell)$ be a collection without negative scalar products. If $\V\subset \V(X,Y)$ for some $X\sqcup Y = [n]$ then we are done. Thus, w.l.o.g., there are $\mathbf v, \mathbf w\in \V$ with $v_n = 1$, $w_n = -1$. By induction, the family $\V(n^0)$ satisfies \eqref{sick} with $n$ replaced by $n-1$. Thus, to verify the induction step, we have to show that
  $$|\V(n^+)|+|\V(n^-)|\le \frac 12\big(p(n,k,\ell)-p(n-1,k,\ell)\big).$$
  It should be clear from the definition that $p(n,k,\ell)-p(n-1,k,\ell)= \max \big\{p(n-1,k,\ell-1),p(n-1,k-1,\ell)\big\}\ge \frac 12\big(p(n-1,k,\ell-1)+p(n-1,k-1,\ell)\big)$.
  Thus, to prove the displayed inequality, it is sufficient for us to show that \begin{equation}\label{eqshow}|\V(n^+)|\le \frac 14 p(n-1,k-1,\ell),\end{equation} as well as $|\V(n^-)|\le \frac 14 p(n-1,k,\ell-1)$. We show only the former, because the latter can be proved in the same way.

  Any $\mathbf v'\in \V(n^+)$ satisfies $S(\mathbf v')\cap S(\mathbf w)\ne \emptyset$, and thus $|\V(n^+)|\le |\V(n-1,k-1,\ell)|-|\V(n-k-\ell,k-1,\ell)| \le {k+\ell-1\choose k}(k+\ell-1)
  {n-1\choose k+\ell-2}\le \frac {k+\ell}n|\V(n-1,k-1,\ell)|$. On the other hand, $p(n-1,k-1,\ell)\ge {(n-1)/2\choose k-1}{(n-1)/2\choose \ell}\ge
  2^{-k-\ell}|\V(n-1,k-1,\ell)|$. 
  Thus, \eqref{eqshow} holds provided $n\ge (k+\ell)2^{k+\ell+2} = n_0$.

  The only thing we are left to show is that for $n\ge 3n_0$ the maximum on the right hand side of \eqref{sick}  is attained by the first term. In other words, that $p(n,k,\ell)\ge 2{n_0\choose k}{n_0\choose \ell}$. However, $p(n,k,\ell)\ge p(3n_0,k,\ell)\ge
   {3n_0/2\choose k}{3n_0/2\choose \ell}>2{n_0\choose k}{n_0\choose \ell}$. This concludes the proof.

\end{document}